\newtheorem{algorithm}{Algorithm}
\begin{document}
\newcommand{\xfill}[2][1ex]{{%
  \dimen0=#2\advance\dimen0 by #1
  \leaders\hrule height \dimen0 depth -#1\hfill%
}}
\title{The convergence rate of a golden ratio algorithm for equilibrium problems
}

\titlerunning{Rate of convergence of a golden ratio algorithm for EPs}        

\author{Dang Van Hieu
}

\authorrunning{Dang Van Hieu} 
\institute{Dang Van Hieu \at
              Applied Analysis Research Group, Faculty of Mathematics and Statistics, \\
Ton Duc Thang University, Ho Chi Minh City, Vietnam\\
              \email{dangvanhieu@tdtu.edu.vn}           
            }

\date{Received: date / Accepted: date}

\maketitle

\begin{abstract}
In this paper, we establish the $R$-linear rate of convergence of a golden ratio algorithm for solving an equilibrium problem in a Hilbert space. 
Several experiments are performed to show the numerical behavior of the algorithm and also to compare with others.
\keywords{Equilibrium problem \and Strongly pseudomonotone bifunction \and Lipschitz-type condition}
\subclass{65J15 \and  47H05 \and  47J25 \and  47J20 \and  91B50.}
\end{abstract}
\section{Introduction}\label{intro}
The equilibrium problem (EP), in the sense of Blum and Oettli \cite{BO1994,MO1992}, is a general model which unifies in a simple form various 
mathematical models as optimization problems, variational inequalities, fixed point problems and Nash equilibrium problem, see in \cite{FP2002,K2007}. 
Together with studying the existence of solution, the iterative method for approximating solutions of problem (EP) is also studied and has received a lot 
of interest by many authors. Some notable iterative methods for solving problem (EP) can be found, for example, in \cite{AH2018,FA1997,HCX2018,H2018MMOR,Hieu2018NUMA,K2003,M1999,QMH2008,SS2011,SNN2013} and the references 
cited therein. In this paper, we concern with the rate of convergence of an algorithm, namely the golden ratio algorithm, for solving problem (EP) in 
a Hilbert space.\\[.1in]
One of the most popular methods for approximating a solution of problem (EP) is the proximal point method \cite{M1999,K2003} which is constructed 
around the resolvent of cost bifunction. The resolvent of a bifunction is regarding a regularization equilibirum subproblem. The regularization solutions 
can converge finitely or asymptotically to a solution of original problem (EP). However, the problem of this method is that in practice the computation 
of a value of resolvent mapping at a point, i.e., finding a regularization solution, is often expensive and we have to use a inner loop. This is of course time-consuming. 
Another method for solving problem (EP) is the proximal-like method \cite{FA1997} which incluses optimization problems. Recently, the authors in \cite{QMH2008} have 
investigated and extended further the convergence of the proximal-like method in \cite{FA1997} under other assumptions that the cost bifunction 
is pseudomonotone and satisfies a Lipschitz-type condition. The proximal-like methods in \cite{FA1997,QMH2008} are also called the extragradient 
method due to the early results of Korpelevich \cite{K1976} on saddle point problem. We can use some known convex optimization programming 
to compute in the extragradient method. This is reason to explain why the latter is often easier to solve numerically than the proximal point method in 
\cite{K2003,M1999}. \\[.1in]
In view of the extragradient method in \cite{FA1997,QMH2008}, it is seen that per each iteration we have to solve two convex optimization subproblems 
on feasible set. This can be expansive if cost bifunction and feasible set have complex structures. Then, in recent years, many variants of the extragradient 
method have been developed to reduce the complexity of algorithm as well as to weaken assumptions imposed on cost bifunction, see, for example, in \cite{H2017NUMA,H2017JIMO,LS2016}. \\[.1in]
Very recently, motivated by the results of Malitsky in \cite{M2018}, Vinh in \cite{V2018} has presented and analyzed the weak convergence of a new algorithm, 
namely Golden Ratio Algorithm (GRA), under the two hypotheses of pseudomonotonicity and Lipschitz-type condition imposed on cost bifunction. The main advantage 
of GRA is that over each iteration the algorithm only requires to compute a convex optimization program. A modification of GRA is also presented where the 
algorithm uses a sequence of variable stepsizes being diminishing and non-summable. By strengthening the pseudomonotonicity by the strongly pseudomonotonicity, 
Vinh has established the strong convergence of this modification. A question arises naturally here as follows:\\[.1in]
\textbf{Question:} \textit{Can we establish the convergence rate of GRA?}\\[.1in]
In this paper, we give a positive answer to the aforementioned question. Under the assumptions of strongly pseudomonotonicity and Lipschitz-type condition 
of cost bifunction, we prove that algorithm GRA converges $R$-linearly. The remainder of this paper is organized as follows: In Sect. \ref{pre} we collect 
some definitions and preliminary results used in the paper. Sect. \ref{main} deals with presenting algorithm GRA and establishing its convergence rate. Finally, 
in Sect. \ref{example}, several numerical results are reported to show the behavior of the algorithm in comparison with others.
\section{Preliminaries}\label{pre}
Let $H$ be a real Hilbert space and $C$ be a nonempty closed convex subset of $H$. Let $f:C\times C\to \Re$ be a bifunction with $f(x,x)=0$ for all $x\in C$. 
The equilibrium problem for the bifunction $f$ on $C$ is stated as follows:
$$
\mbox{Find}~x^*\in C~\mbox{such that}~f(x^*,y)\ge 0,~\forall y\in C.
\eqno{\rm (EP)}
$$
Let $g:C\to \Re$ be a proper, lower semicontinuous, convex function. The proximal operator ${\rm prox}_{\lambda g}$ of $g$ with some $\lambda>0$ 
is defined by 
$$
{\rm prog}_{\lambda g}(z)=\arg\min \left\{\lambda g(x)+\frac{1}{2}||x-z||^2:x\in C\right\},~z\in H.
$$
The following is an important property of the proximal mapping (see \cite{BC2011} for more details)
\begin{lemma}\label{prox}
$
\bar{x}={\rm prog}_{\lambda g}(z) \Leftrightarrow \left\langle \bar{x}-z,x-\bar{x}\right\rangle \ge \lambda \left(g(\bar{x})-g(x)\right),~\forall x\in C.
$
\end{lemma}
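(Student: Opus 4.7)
The plan is to prove the equivalence via the first-order optimality conditions for the convex minimization problem that defines the proximal mapping, treating the two implications separately.

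For the forward direction ($\Rightarrow$), I would begin by observing that $\bar{x} = {\rm prox}_{\lambda g}(z)$ means $\bar{x}$ minimizes the function $\phi(x) := \lambda g(x) + \tfrac{1}{2}\|x-z\|^{2}$ over the convex set $C$. Since $C$ is convex, for any fixed $x \in C$ and any $t \in (0,1)$ the point $x_{t} := \bar{x} + t(x-\bar{x})$ lies in $C$, and optimality gives $\phi(x_{t}) \geq \phi(\bar{x})$. I would then use convexity of $g$ to bound $g(x_{t}) \leq (1-t)g(\bar{x}) + t g(x)$, and expand the quadratic term $\tfrac{1}{2}\|x_{t}-z\|^{2}$ exactly using $\|x_{t}-z\|^{2} = \|\bar{x}-z\|^{2} + 2t\langle \bar{x}-z, x-\bar{x}\rangle + t^{2}\|x-\bar{x}\|^{2}$. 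Rearranging $\phi(x_{t}) - \phi(\bar{x}) \geq 0$, dividing by $t>0$, and sending $t \to 0^{+}$ kills the $O(t)$ remainder and leaves precisely $\langle \bar{x}-z, x-\bar{x}\rangle \geq \lambda\bigl(g(\bar{x})-g(x)\bigr)$.

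For the reverse direction ($\Leftarrow$), I would start from the elementary identity
\[
\tfrac{1}{2}\|x-z\|^{2} - \tfrac{1}{2}\|\bar{x}-z\|^{2} = \langle x-\bar{x},\, \bar{x}-z\rangle + \tfrac{1}{2}\|x-\bar{x}\|^{2},
\]
valid for every $x \in C$. Substituting this into $\phi(x) - \phi(\bar{x})$ yields
\[
\phi(x) - \phi(\bar{x}) = \lambda\bigl(g(x)-g(\bar{x})\bigr) + \langle x-\bar{x},\, \bar{x}-z\rangle + \tfrac{1}{2}\|x-\bar{x}\|^{2}.
\]
The assumed inequality states exactly that the first two terms sum to a nonnegative quantity, so $\phi(x)-\phi(\bar{x}) \geq \tfrac{1}{2}\|x-\bar{x}\|^{2} \geq 0$. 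This simultaneously shows that $\bar{x}$ is a minimizer of $\phi$ over $C$ (hence coincides with ${\rm prox}_{\lambda g}(z)$) and furnishes uniqueness, since strict positivity of $\tfrac{1}{2}\|x-\bar{x}\|^{2}$ for $x \neq \bar{x}$ makes $\bar{x}$ the unique minimizer.

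There is no genuine obstacle here: the only mild care point is the first direction, where $g$ is merely lower semicontinuous rather than differentiable, so the argument avoids subdifferentials and instead extracts the variational inequality by the limit $t \to 0^{+}$ along the convex combination $x_{t}$. Everything else is a direct expansion of the squared norm and convexity of $g$, which is why this lemma is standard and cited from \cite{BC2011}.
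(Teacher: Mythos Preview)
Your proof is correct in both directions. The paper does not actually prove this lemma; it merely states the result and defers to \cite{BC2011} for details. Your argument supplies exactly the standard elementary verification one would expect: the forward direction via convex interpolation along $x_t=\bar{x}+t(x-\bar{x})$ and passage to the limit $t\to 0^+$, and the converse via the three-point expansion of $\tfrac{1}{2}\|x-z\|^2$, which additionally delivers the strong-convexity bound $\phi(x)-\phi(\bar{x})\ge \tfrac{1}{2}\|x-\bar{x}\|^2$ and hence uniqueness. The only tacit hypothesis you are using in the $(\Leftarrow)$ direction is that $\bar{x}\in C$ (so that $g(\bar{x})$ is defined and $\bar{x}$ is an admissible minimizer); this is harmless here since $g:C\to\Re$ is real-valued and the right-hand inequality would otherwise be vacuous, but it is worth stating explicitly.
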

\begin{remark}\label{rem1}
From Lemma \ref{prox}, it is easy to show that if $x={\rm prox}_{\lambda g}(x)$ then 
$$x\in {\rm arg}\min\left\{g(y):y\in C\right\}:=\left\{x\in C: g(x)=\min_{y\in C}g(y)\right\}.$$
\end{remark}
In any Hilbert space, we have the following result, see, e.g., in \cite[Corollary 2.14]{BC2011}.
\begin{lemma}\label{eq} For all $x,~y\in H$ and $\alpha\in \Re$, the following equality always holds
$$
||\alpha x+(1-\alpha)y||^2=\alpha ||x||^2+(1-\alpha)||y||^2-\alpha(1-\alpha)||x-y||^2.
$$
\end{lemma}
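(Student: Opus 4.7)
The plan is to prove the identity by direct expansion using the inner product structure of the Hilbert space $H$, since $\|u\|^2 = \langle u, u\rangle$ for every $u \in H$. The result is a purely algebraic consequence of bilinearity of the inner product combined with the polarization-type identity $2\langle x,y\rangle = \|x\|^2 + \|y\|^2 - \|x-y\|^2$, so no analytic machinery is required.

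First, I would expand the left-hand side using bilinearity:
$$\|\alpha x + (1-\alpha) y\|^2 = \alpha^2 \|x\|^2 + 2\alpha(1-\alpha)\langle x,y\rangle + (1-\alpha)^2 \|y\|^2.$$
Next, I would substitute $2\langle x,y\rangle = \|x\|^2 + \|y\|^2 - \|x-y\|^2$ in the cross term, which yields
$$\|\alpha x + (1-\alpha) y\|^2 = \alpha^2\|x\|^2 + (1-\alpha)^2\|y\|^2 + \alpha(1-\alpha)\bigl(\|x\|^2 + \|y\|^2\bigr) - \alpha(1-\alpha)\|x-y\|^2.$$

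Finally, I would collect the coefficients of $\|x\|^2$ and $\|y\|^2$ using the elementary identities $\alpha^2 + \alpha(1-\alpha) = \alpha$ and $(1-\alpha)^2 + \alpha(1-\alpha) = 1-\alpha$, which immediately gives
$$\|\alpha x + (1-\alpha) y\|^2 = \alpha\|x\|^2 + (1-\alpha)\|y\|^2 - \alpha(1-\alpha)\|x-y\|^2,$$
as claimed. There is no real obstacle here; the only thing to keep an eye on is bookkeeping when substituting the polarization identity and verifying that the coefficients collapse correctly. Since the argument uses only bilinearity of $\langle \cdot,\cdot\rangle$ and elementary arithmetic with $\alpha \in \mathbb{R}$, the identity holds for all real scalars $\alpha$, not just convex combinations.
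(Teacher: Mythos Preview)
Your argument is correct: expanding $\|\alpha x+(1-\alpha)y\|^2$ by bilinearity and replacing $2\langle x,y\rangle$ with $\|x\|^2+\|y\|^2-\|x-y\|^2$ immediately yields the identity, and the coefficient bookkeeping is fine. The paper does not actually give a proof of this lemma; it simply cites \cite[Corollary~2.14]{BC2011}, so your direct computation supplies what the paper omits.
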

Throughout this paper, we assume that the function $f(\cdot,y)$ is convex, lower semicontinuous and $f(x,\cdot)$ is hemicontinuous on $C$ for each 
$x,~y\in C$. In order to establish the rate of convergence of algorithm GRA, we consider the following assumptions:\\[.1in]
(SP):\, $f$ is strongly pseudomonotone, i.e., there exists a constant $\gamma>0$ such that
$$
f(x,y)\ge 0 \Longrightarrow f(y,x) \le-\gamma ||x-y||^2\,~\mbox{for all}~ x,y\in C.
$$
(LC):\, $f$ satisfies the Lipschitz-type condition, i.e., there exist $c_1>0,c_2>0$ such that
$$
 f(x,y) + f(y,z) \geq f(x,z) - c_1||x-y||^2 - c_2||y-z||^2\, ~ \mbox{for all}~ x,y,z \in C.
 $$
Under these assumptions, problem (EP) has the unique solution, denoted by $x^\dagger$. For solving problem (EP) with the conditions (SP) 
and (LC), Vinh in \cite{V2018} introduced the following modified golden ratio algorithm (MGRA1): Set $\varphi=\frac{1+\sqrt{5}}{2}$ 
(the golden ratio), take $x_0,~y_1\in C$ and compute 
 $$
\left \{
\begin{array}{ll}
x_n=\frac{(\varphi-1)y_n+x_{n-1}}{\varphi},\\
y_{n+1}=\arg\min \left\{\lambda_n f(y_n,y)+\frac{1}{2}||x_n-y||^2:y\in C\right\},
\end{array}
\right.
\eqno{\rm (MGRA1)}
$$
where $\left\{\lambda_n\right\}\subset (0,+\infty)$ is a sequence satisfying the hypotheses:
$$
{\rm (C1):\, \lim_{n\to\infty}\lambda_n=0},\quad {\rm (C2):\,\sum_{n=0}^\infty \lambda_n=+\infty}.
$$
Without the condition (LC), Vinh introduced the second modified golden ratio algorithm (MGRA2) which is based on the early results in \cite{SS2011}: 
Take $x_0,~y_1\in C$ and compute 
$$
\left \{
\begin{array}{ll}
x_n=\frac{(\varphi-1)y_n+x_{n-1}}{\varphi},\\
g_n\in \partial f(y_n,.)(y_n),~\lambda_n=\frac{\beta_n}{\max \left\{1,||g_n||\right\}},\\
y_{n+1}=P_C(x_n-\lambda_n g_n),
\end{array}
\right.
\eqno{\rm (MGRA2)}
$$
where $\left\{\beta_n\right\}\subset (0,+\infty)$ is a sequence satisfying the hypotheses:
$$
{\rm (C3):\, \sum_{n=0}^\infty \beta_n=+\infty},\quad {\rm (C4):\,\sum_{n=0}^\infty \beta_n^2<+\infty}.
$$
In \cite{V2018}, Vinh proved that the sequence $\left\{x_n\right\}$ generated by MGRA1 or MGRA2 converges strongly 
to the unique solution $x^\dagger$ of problem (EP) without any estimate of the rate of convergence of the sequence 
$\left\{x_n\right\}$.  We remark here that algorithm MGRA1 cannot converge linearly. Indeed, consider our problem 
for $C=H=\Re$ and $f(x,y)=x(y-x)$. The unique solution of the problem is $x^\dagger=0$. It is easy to see that $f$ 
satisfies the conditions (SP) and (LC). It follows from the definition of $y_{n+1}$ and a simple computation that 
\begin{equation}\label{t1}
y_{n+1}=x_n-\lambda_n y_n.
\end{equation}
Since $x_n=\frac{(\varphi-1)y_n+x_{n-1}}{\varphi}$, we obtain that $y_n=\frac{\varphi}{\varphi-1}x_n-\frac{1}{\varphi-1}x_{n-1}$. 
Thus, from the relation (\ref{t1}), we obtain
$$
\frac{\varphi}{\varphi-1}x_{n+1}-\frac{1}{\varphi-1}x_{n}=x_n-\lambda_n \left[ \frac{\varphi}{\varphi-1}x_n-\frac{1}{\varphi-1}x_{n-1}\right],
$$
or
\begin{equation}\label{t2}
x_{n+1}=(1-\lambda_n)x_n+\frac{\lambda_n}{\varphi} x_{n-1}.
\end{equation}
Since $\lambda_n\to 0$, without loss of generality, we can assume that $\left\{\lambda_n\right\}\subset (0,1)$. If choose $x_0,~y_1>0$, 
from the definition of $x_1$, we obtain that $x_1>0$. Thus, from the relation (\ref{t2}), we get by the induction that $x_n>0$ for all $n\ge 0$.
Now, assume that $\left\{x_n\right\}$ converges linearly to the solution $x^\dagger=0$, i.e., there exists a number $\sigma\in (0,1)$ such that 
$||x_{n+1}-x^\dagger||\le \sigma ||x_{n}-x^\dagger||$ or $x_{n+1}\le \sigma x_n$ for all $n\ge 0$. This together with the relation (\ref{t2}) implies 
that 
$$
x_{n+1}=(1-\lambda_n)x_n+\frac{\lambda_n}{\varphi} x_{n-1}\ge (1-\lambda_n)\frac{x_{n+1}}{\sigma}+\frac{\lambda_n}{\varphi} \frac{x_{n}}{\sigma}
\ge (1-\lambda_n)\frac{x_{n+1}}{\sigma}+\frac{\lambda_n}{\varphi} \frac{x_{n+1}}{\sigma^2}.
$$
Thus
\begin{equation}\label{t3}
1\ge \frac{1-\lambda_n}{\sigma}+\frac{\lambda_n}{\varphi\sigma^2},~\forall n\ge 0.
\end{equation}
Passing to the limit in the relation (\ref{t3}) as $n\to\infty$ and using the hypothesis (C1), we obtain that $1\ge \frac{1}{\sigma}$. This is contrary because 
$\sigma \in (0,1)$. This says that the sequence $\left\{x_n\right\}$ does not converge linearly to the solution $x^\dagger=0$. Thus, algorithm MGRA1 cannot 
converge linearly.\\[.1in]
In the next section, we will present in details algorithm GRA with a fixed stepsize and establish the $R$-linear rate of convergence of the algorithm.
\section{The $R$-linear rate of convergence of GRA}\label{main}
In this section, we study the rate of convergence of the following golden ratio algorithm.\\
\noindent\rule{12.1cm}{0.4pt} 
\begin{algorithm}[Golden Ratio Algorithm for Equilibrium Problem].\label{alg1}\\
\noindent\rule{12.1cm}{0.4pt}\\
\textbf{Initialization:} Set $\varphi=\frac{\sqrt{5}+1}{2}$. Choose $\bar{x}_0\in H,~x_1\in C$ and $\lambda\in \Re$ such that 
$0<\lambda < \frac{\varphi}{4\max \left\{c_1,c_2\right\}}$\\
\noindent\rule{12.1cm}{0.4pt}\\
\textbf{Iterative Steps:} Assume that $\bar{x}_{n-1}\in H, ~x_n\in C$ are known, calculate $x_{n+1}$ as follows:
$$
\left \{
\begin{array}{ll}
\bar{x}_n=\frac{(\varphi-1)x_n+\bar{x}_{n-1}}{\varphi},\\
x_{n+1}=\mbox{\rm prox}_{\lambda f(x_n,.)}(\bar{x}_n).
\end{array}
\right.
$$
\end{algorithm}
\noindent\rule{12.1cm}{0.4pt}\\[.1in]
We can use the following stopping criterion for Algorithm \ref{alg1}: If $x_{n+1}=x_n=\bar{x}_n$ then stop and $x_n$ is the solution of problem (EP).
This follows from the definition of $x_{n+1}$ and Remark \ref{rem1}. Thus, if Algorithm \ref{alg1} terminates then the solution of the problem can be 
found. Otherwise, we have the following main result.
\begin{theorem}\label{theo3}
Under the conditions (SM) and (LC), the sequence $\left\{x_n\right\}$ generated by Algorithm \ref{alg1} converges $R$-linearly to the unique solution 
$x^\dagger$ of problem (EP).
\end{theorem}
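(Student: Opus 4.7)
The plan is to exhibit a Lyapunov sequence $\Phi_n$ that dominates $\|x_n-x^\dagger\|^2$ from above and contracts at a fixed rate $\rho\in(0,1)$; iterating $\Phi_{n+1}\le \rho\,\Phi_n$ then yields $R$-linear convergence of $\{x_n\}$. The natural candidate, suggested by the analogous analysis of the golden ratio method for variational inequalities, is
\[
\Phi_n=\|\bar{x}_{n-1}-x^\dagger\|^2+a\,\|x_n-x^\dagger\|^2+b\,\|x_{n-1}-x_n\|^2,
\]
with constants $a,b>0$ to be pinned down.

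The first main input comes from applying Lemma~\ref{prox} to $x_{n+1}=\mathrm{prox}_{\lambda f(x_n,\cdot)}(\bar{x}_n)$ at the test point $x^\dagger$. After the cosine expansion $2\langle u-v,w-u\rangle=\|v-w\|^2-\|u-v\|^2-\|u-w\|^2$, and using that $f(x^\dagger,x_n)\ge 0$ together with (SP) to infer $f(x_n,x^\dagger)\le -\gamma\|x_n-x^\dagger\|^2$, I obtain
\[
\|x_{n+1}-x^\dagger\|^2\le \|\bar{x}_n-x^\dagger\|^2-\|x_{n+1}-\bar{x}_n\|^2-2\lambda f(x_n,x_{n+1})-2\lambda\gamma\|x_n-x^\dagger\|^2.
\]
The second input comes from applying Lemma~\ref{prox} to the preceding prox step $x_n=\mathrm{prox}_{\lambda f(x_{n-1},\cdot)}(\bar{x}_{n-1})$ with test point $x_{n+1}$, and then invoking (LC) on the triple $(x_{n-1},x_n,x_{n+1})$ to convert $f(x_{n-1},x_n)-f(x_{n-1},x_{n+1})$ into $-f(x_n,x_{n+1})-c_1\|x_{n-1}-x_n\|^2-c_2\|x_n-x_{n+1}\|^2$. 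This yields a matching lower bound for $2\lambda f(x_n,x_{n+1})$, so that the unpleasant $f$-term in the previous inequality cancels.

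The third ingredient is the averaging identity. Writing the update as $\bar{x}_n=\frac{1}{\varphi}\bar{x}_{n-1}+\frac{\varphi-1}{\varphi}x_n$ and applying Lemma~\ref{eq} with $\alpha=1/\varphi$, I can expand $\|\bar{x}_n-x^\dagger\|^2$ into a convex combination of $\|\bar{x}_{n-1}-x^\dagger\|^2$ and $\|x_n-x^\dagger\|^2$ minus a multiple of $\|x_n-\bar{x}_n\|^2$. Moreover, the identity $\bar{x}_{n-1}-x_n=\varphi(\bar{x}_n-x_n)$, which uses $\varphi-1=1/\varphi$, lets me convert the residual inner product produced in the second step into a quadratic in $\|x_n-\bar{x}_n\|^2$ and $\|x_{n+1}-\bar{x}_n\|^2$. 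Plugging everything into the definition of $\Phi_{n+1}$ and reorganizing, I aim to prove $\Phi_{n+1}\le\rho\,\Phi_n$ for some $\rho\in(0,1)$, from which $\|x_n-x^\dagger\|^2\le \rho^{n-1}\Phi_1/a$ follows directly.

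The hard part is the bookkeeping for this Lyapunov combination: $a$, $b$, $\rho$ must be chosen so that the coefficients of the three error quadratics $\|x_{n+1}-\bar{x}_n\|^2$, $\|x_n-\bar{x}_n\|^2$, $\|x_n-x_{n+1}\|^2$ stay non-positive while the coefficient of $\|x_n-x^\dagger\|^2$ retains a genuine $\gamma$-margin to drive the contraction. The stepsize condition $\lambda<\varphi/(4\max\{c_1,c_2\})$ is precisely what guarantees that this linear system is feasible: the factor $4$ arises from combining the golden-ratio averaging weights $1/\varphi$ and $(\varphi-1)/\varphi$ with the Lipschitz quadratics from (LC), while $\varphi^2=\varphi+1$ is the identity that makes the cross-terms cancel cleanly.
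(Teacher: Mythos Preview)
Your ingredients are the right ones and match the paper exactly: the prox inequality at $x^\dagger$ combined with (SP), the prox inequality at $x_{n+1}$ combined with (LC), and the golden-ratio averaging. Where your sketch and the paper part ways is in the third step, and this is where the gap lies.

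After adding the two prox estimates you obtain (as the paper does) an inequality of the form
\[
\|x_{n+1}-x^\dagger\|^2+\tfrac{\varphi}{2}\|x_{n+1}-x_n\|^2
\le \|\bar{x}_n-x^\dagger\|^2+\tfrac{\epsilon\varphi}{2}\|x_n-x_{n-1}\|^2
+(\varphi-1)\|x_{n+1}-\bar{x}_n\|^2-\varphi\|\bar{x}_n-x_n\|^2-2\lambda\gamma\|x_n-x^\dagger\|^2,
\]
and the term $(\varphi-1)\|x_{n+1}-\bar{x}_n\|^2$ sits on the \emph{wrong} side with a strictly positive coefficient. Your plan is to expand $\|\bar{x}_n-x^\dagger\|^2$ on the right via Lemma~\ref{eq}; this only contributes additional $\|\bar{x}_n-x_n\|^2$, not $\|x_{n+1}-\bar{x}_n\|^2$, so the bad term survives and your proposed Lyapunov $\Phi_n=\|\bar{x}_{n-1}-x^\dagger\|^2+a\|x_n-x^\dagger\|^2+b\|x_{n-1}-x_n\|^2$ cannot absorb it without losing the $\varphi/2$ budget on $\|x_{n+1}-x_n\|^2$ (triangle-inequality splitting costs $2(\varphi-1)>\varphi/2$). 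The paper instead substitutes in the \emph{opposite} direction: it rewrites $\|x_{n+1}-x^\dagger\|^2$ on the left using $x_{n+1}=\tfrac{\varphi}{\varphi-1}\bar{x}_{n+1}-\tfrac{1}{\varphi-1}\bar{x}_n$ and Lemma~\ref{eq}, which introduces exactly $\tfrac{1}{\varphi}\|x_{n+1}-\bar{x}_n\|^2$ on the left. Since $\tfrac{1}{\varphi}=\varphi-1$, this is the place where $\varphi^2=\varphi+1$ does its work, and the dangerous term cancels identically.

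The second point you are missing is that after this cancellation one does \emph{not} get a one-step contraction. The paper lands on a genuine three-term recurrence
\[
a_{n+1}+b_{n+1}\le (1-\alpha)a_n+\tfrac{\alpha}{\varphi}a_{n-1}+\epsilon b_n,
\qquad a_n=\tfrac{\varphi}{\varphi-1}\|\bar{x}_n-x^\dagger\|^2,\ b_n=\tfrac{\varphi}{2}\|x_n-x_{n-1}\|^2,\ \alpha=2\lambda\gamma,
\]
and then chooses $r_1,r_2>0$ solving $r_2-r_1=1-\alpha$, $r_1r_2=\alpha/\varphi$ so that $a_{n+1}+r_1a_n+b_{n+1}\le r_2(a_n+r_1a_{n-1})+\epsilon b_n$; a short monotonicity argument gives $r_2<1$, and $\theta=\max\{r_2,\epsilon\}\in(0,1)$ is the actual contraction factor. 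So the working Lyapunov is $a_n+r_1a_{n-1}+b_n$ (two consecutive $\bar{x}$-distances plus one step difference), not your mixed $\bar{x}/x$ combination, and $r_1$ is determined by a quadratic, not free. Your statement that ``$a,b,\rho$ can be pinned down so that the coefficients stay non-positive'' glosses over both of these steps; as written, the bookkeeping does not close.
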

\begin{proof}
It follows from the definition of $x_{n+1}$ and Lemma \ref{prox} that 
\begin{equation}\label{eq:1}
\left\langle \bar{x}_n-x_{n+1}, x-x_{n+1}\right\rangle \le \lambda \left( f(x_n,x)-f(x_n,x_{n+1})\right), ~\forall x\in C,
\end{equation}
which, with $x=x^\dagger$, follows that 
\begin{equation}\label{eq:2}
2\left\langle \bar{x}_n-x_{n+1}, x^\dagger-x_{n+1}\right\rangle \le 2\lambda \left( f(x_n,x^\dagger)-f(x_n,x_{n+1})\right).
\end{equation}
Applying the equality $2 \left\langle a,b\right\rangle=||a||^2+||b||^2-||a-b||^2$ to the relation (\ref{eq:2}), we obtain
\begin{eqnarray}
||\bar{x}_n-x_{n+1}||^2+||x_{n+1}-x^\dagger||^2-||\bar{x}_n-x^\dagger||^2\le 2\lambda \left( f(x_n,x^\dagger)-f(x_n,x_{n+1})\right). \label{eq:3}
\end{eqnarray}
Using the relation (\ref{eq:1}) with $n:=n-1$, we obtain
\begin{equation}\label{eq:4}
\left\langle \bar{x}_{n-1}-x_n, x-x_n\right\rangle \le \lambda \left( f(x_{n-1},x)-f(x_{n-1},x_n)\right), ~\forall x\in C.
\end{equation}
Substituting $x=x_{n+1}$ into (\ref{eq:4}), we get 
\begin{equation}\label{eq:5}
\left\langle \bar{x}_{n-1}-x_n, x_{n+1}-x_n\right\rangle \le \lambda \left( f(x_{n-1},x_{n+1})-f(x_{n-1},x_n)\right).
\end{equation}
Noting that from the definition of $\bar{x}_n$, we have $\bar{x}_{n-1}-x_n=\varphi (\bar{x}_n-x_n)$. Then, from the relation (\ref{eq:5}), 
we come the following estimate,
$$
2\varphi\left\langle \bar{x}_n-x_n, x_{n+1}-x_n\right\rangle \le 2\lambda \left( f(x_{n-1},x_{n+1})-f(x_{n-1},x_n)\right).
$$
Thus, as the relation (\ref{eq:3}), developing the product $2\left\langle \bar{x}_n-x_n, x_{n+1}-x_n\right\rangle$, we get 
\begin{equation}\label{eq:7}
\varphi ||\bar{x}_n-x_n||^2+\varphi ||x_{n+1}-x_n||^2-\varphi ||x_{n+1}-\bar{x}_n||^2 \le 2\lambda \left( f(x_{n-1},x_{n+1})-f(x_{n-1},x_n)\right).
\end{equation}
Adding the relations (\ref{eq:3}) and (\ref{eq:7}), and using the condition (LC), we obtain
\begin{eqnarray}
&&||x_{n+1}-x^\dagger||^2-||\bar{x}_n-x^\dagger||^2+(1-\varphi)||x_{n+1}-\bar{x}_n||^2+\varphi ||\bar{x}_n-x_n||^2\nonumber\\ 
&&+\varphi ||x_{n+1}-x_n||^2\nonumber\\ 
&\le& 2\lambda f(x_n,x^\dagger)- 2\lambda \left[f(x_{n-1},x_n) + f(x_n,x_{n+1}) - f(x_{n-1},x_{n+1})\right]\nonumber\\ 
&\le& 2\lambda f(x_n,x^\dagger)+ 2\lambda c_1 ||x_{n-1}-x_n||^2 + 2\lambda c_2 ||x_n - x_{n+1}||^2.\label{eq:8}
\end{eqnarray}
Since $0<\lambda < \frac{\varphi}{4\max\left\{c_1,c_2\right\}}$, we reduce that 
$\lambda<\frac{\varphi}{4c_1}$ and $\lambda<\frac{\varphi}{4c_1}$. Thus, there exists a number $\epsilon\in (0,1)$ such that 
$$
\lambda<\frac{\epsilon\varphi}{4c_1}\qquad \mbox{and} \qquad \lambda<\frac{\varphi}{4c_2}.
$$
Combining these inequalities with the relation (\ref{eq:8}), we get 
\begin{eqnarray*}
||x_{n+1}-x^\dagger||^2&-&||\bar{x}_n-x^\dagger||^2+(1-\varphi)||x_{n+1}-\bar{x}_n||^2+\varphi ||\bar{x}_n-x_n||^2\\
&&+\varphi ||x_{n+1}-x_n||^2\\ 
&\le& 2\lambda f(x_n,x^\dagger)+ \frac{\epsilon\varphi}{2} ||x_{n-1}-x_n||^2 + \frac{\varphi}{2} ||x_n - x_{n+1}||^2.
\end{eqnarray*}
Since $x^\dagger$ is the solution of problem (EP) and $x_n\in C$, $f(x_n,x^\dagger)\ge 0$. Hence, from the strong pseudomonotonicity of $f$, we derive 
$f(x_n,x^\dagger)\le -\gamma ||x_n-x^\dagger||^2$. This together with the last inequality implies that
\begin{eqnarray}
&&||x_{n+1}-x^\dagger||^2+\frac{\varphi}{2} ||x_n - x_{n+1}||^2\le ||\bar{x}_n-x^\dagger||^2+ \frac{\epsilon\varphi}{2} ||x_{n-1}-x_n||^2\nonumber\\ 
&-&(1-\varphi)||x_{n+1}-\bar{x}_n||^2-\varphi ||\bar{x}_n-x_n||^2 -2\lambda \gamma ||x_n-x^\dagger||^2.\label{eq:9}
\end{eqnarray}
Moreover, from the definition of $\bar{x}_n$ and Lemma \ref{eq}, we have
\begin{eqnarray}\label{eq:10}
||x_{n+1}-x^\dagger||^2&=&\frac{\varphi}{\varphi-1}||\bar{x}_{n+1}-x^\dagger||^2-\frac{1}{\varphi-1}||\bar{x}_n-x^\dagger||^2\nonumber\\ 
&&+\frac{\varphi}{(\varphi-1)^2}||\bar{x}_{n+1}-\bar{x}_n||^2\nonumber\\
&=&\frac{\varphi}{\varphi-1}||\bar{x}_{n+1}-x^\dagger||^2-\frac{1}{\varphi-1}||\bar{x}_n-x^\dagger||^2\nonumber\\ 
&&+\frac{1}{\varphi}||x_{n+1}-\bar{x}_n||^2.
\end{eqnarray}
Thus, from the relations (\ref{eq:9}), (\ref{eq:10}) and the fact $1-\varphi-\frac{1}{\varphi}=0$, we obtain
\begin{eqnarray}
&&\frac{\varphi}{\varphi-1}||\bar{x}_{n+1}-x^\dagger||^2+\frac{\varphi}{2} ||x_n - x_{n+1}||^2\le \frac{\varphi}{\varphi-1} ||\bar{x}_n-x^\dagger||^2+ \frac{\epsilon\varphi}{2} ||x_{n-1}-x_n||^2\nonumber\\ 
&&-(1-\varphi-\frac{1}{\varphi})||x_{n+1}-\bar{x}_n||^2-\varphi ||\bar{x}_n-x_n||^2 -2\lambda \gamma ||x_n-x^\dagger||^2\nonumber\\
&\le& \frac{\varphi}{\varphi-1} ||\bar{x}_n-x^\dagger||^2+ \frac{\epsilon\varphi}{2} ||x_{n-1}-x_n||^2 -2\lambda \gamma ||x_n-x^\dagger||^2.\label{eq:11}
\end{eqnarray}
Note that from the definition of $\bar{x}_n$, we obtain
$
x_n=\frac{\varphi}{\varphi-1}\bar{x}_n-\frac{1}{\varphi-1}\bar{x}_{n-1}.
$
Thus, it follows from Lemma \ref{eq} that
\begin{eqnarray}\label{eq:12}
||x_n-x^\dagger||^2&=&\frac{\varphi}{\varphi-1}||\bar{x}_n-x^\dagger||^2-\frac{1}{\varphi-1}||\bar{x}_{n-1}-x^\dagger||^2+\frac{\varphi}{(\varphi-1)^2}||\bar{x}_n-\bar{x}_{n-1}||^2\nonumber\\
&\ge&\frac{\varphi}{\varphi-1}||\bar{x}_n-x^\dagger||^2-\frac{1}{\varphi-1}||\bar{x}_{n-1}-x^\dagger||^2.
\end{eqnarray}
From the relations (\ref{eq:11}) and (\ref{eq:12}), we see that
\begin{eqnarray}
\frac{\varphi}{\varphi-1}||\bar{x}_{n+1}-x^\dagger||^2&+&\frac{\varphi}{2}||x_{n+1}-x_n||^2\le\frac{\varphi}{\varphi-1}(1-2\gamma\lambda)||\bar{x}_n-x^\dagger||^2\nonumber\\
&&+\frac{2\gamma\lambda}{\varphi-1}||\bar{x}_{n-1}-x^\dagger||^2+\frac{\epsilon\varphi}{2} ||x_n-x_{n-1}||^2.\label{eq:13}
\end{eqnarray}
Set $a_n=\frac{\varphi}{\varphi-1}||\bar{x}_n-x^\dagger||^2$, $b_n=\frac{\varphi}{2}||x_{n}-x_{n-1}||^2$ and $\alpha=2\lambda \gamma$, the inequality (\ref{eq:13}) can be rewritten as
\begin{eqnarray}
a_{n+1}+b_{n+1}&\le&(1-\alpha)a_n+\frac{\alpha}{\varphi}a_{n-1}+\epsilon b_n.\label{eq:14}
\end{eqnarray}
Let $r_1>0$ and $r_2>0$. Now, the relation (\ref{eq:14}) can be rewritten as 
\begin{eqnarray}
a_{n+1}+r_1 a_n+b_{n+1}&\le& r_2(a_n+r_1 a_{n-1})+\epsilon b_n\nonumber\\
&&+(1-\alpha-r_2+r_1)a_n+(\frac{\alpha}{\varphi} -r_1 r_2) a_{n-1}.\label{h39}
\end{eqnarray}
Choose $r_1>0$ and $r_2>0$ such that $1-\alpha-r_2+r_1=0$ and $\frac{\alpha}{\varphi} -r_1 r_2=0$. Thus
$$
r_1=\frac{\alpha-1+\sqrt{(\alpha-1)^2+\frac{4\alpha}{\varphi}}}{2}\quad \mbox{and}\quad r_2=\frac{1-\alpha+\sqrt{(\alpha-1)^2+\frac{4\alpha}{\varphi}}}{2}.
$$
Consider the function 
$$
f(t)=\frac{1-t+\sqrt{(t-1)^2+\frac{4t}{\varphi}}}{2}, ~t\in [0,+\infty).
$$
We have 
$$
f'(t)=\frac{-1+\frac{t-1+\frac{2}{\varphi}}{\sqrt{(t-1)^2+\frac{4t}{\varphi}}}}{2}=\frac{\frac{4}{\varphi}(\frac{1}{\varphi}-1)}{2\sqrt{(t-1)^2+\frac{4t}{\varphi}}\left(t-1+\frac{2}{\varphi}+\sqrt{(t-1)^2+\frac{4t}{\varphi}}\right)}<0
$$
because of $\frac{1}{\varphi}<1$. Thus, $f(t)$ is non-increasing on $[0,+\infty)$. Hence, $0<r_2=f(\alpha)<f(0)=1$. Now, set $\theta=\max \left\{\epsilon,r_2\right\}$ 
and note that $\theta\in (0,1)$ then from the relation (\ref{h39}), we obtain
\begin{eqnarray}
a_{n+1}+r_1 a_n+b_{n+1}&\le& \theta (a_n+r_1 a_{n-1}+ b_n),~\forall n\ge 1.\label{h41}
\end{eqnarray}
Thus, we obtain by the induction that 
\begin{eqnarray}
a_{n+1}+r_1 a_n+b_{n+1}&\le& \theta^{n} (a_{1}+r_1 a_{0}+ b_{1}).\label{h41}
\end{eqnarray}
We can reduce that 
$$
\frac{\varphi}{\varphi-1}||\bar{x}_{n+1}-x^\dagger||^2=a_{n+1}\le \theta^{n} (a_{1}+r_1 a_{0}+ b_{1}),
$$
or
$$
||\bar{x}_{n+1}-x^\dagger||^2\le M\theta^n,
$$
where $M=\frac{(\varphi-1)(a_{1}+r_1 a_{0}+ b_{1})}{\varphi}$, or the sequence $\left\{\bar{x}_n\right\}$ converges $R$-linearly. Since 
$x_n=\frac{\varphi}{\varphi-1}\bar{x}_n-\frac{1}{\varphi-1}\bar{x}_{n-1}=\left(1+\frac{1}{\varphi-1}\right)\bar{x}_n-\frac{1}{\varphi-1}\bar{x}_{n-1}$, 
we derive
$$ 
||x_n-x^\dagger||\le \left(1+\frac{1}{\varphi-1}\right)||\bar{x}_n-x^\dagger||+\frac{1}{\varphi-1}||\bar{x}_{n-1}-x^\dagger||.
 $$
Hence, the sequence $\left\{x_n\right\}$ also converges $R$-linearly. This completes the proof.
\end{proof}
\section{Numerical experiments}\label{example}
In this section, we present some experiments to illustrate the numerical behavior of Algorithm \ref{alg1} (GRA) and also to compare with other algorithms. 
Two algorithms used here to compare are MGRA1 \cite[Algorithm 4.1]{V2018} and MGRA2 \cite[Algorithm 4.2]{V2018}. The convergence of these algorithms 
are established under a same assumption of strong pseudomonotonicity of bifunction, while the Lipschitz-type condition are only for Algorithm \ref{alg1} 
and MGRA1. In order to show the computational performance of the algorithms, we describe the behavior of the sequence 
$D_n=||x_n-{\rm prox}_{\lambda f(x_n,.)}(x_n)||^2,~n=0,~1,2,\ldots$ when number of iterations (\# Iterations) is performed or execution time 
(Elapsed Time) in the second elapses. Noting that $D_n=0$ iff $x_n$ is the solution of the considered problem.\\[.1in]
Considering a generalization of the Nash-Cournot oligopolistic equilibrium model in \cite{CKK2004,FP2002} with the affine price and fee-fax functions. 
Assume that there are $m$ companies that produce a commodity. Let $x$ denote the vector whose entry $x_j$ stands for the quantity  of
 the commodity produced  by company $j$. We suppose that the price $p_j(s)$ is a decreasing affine function of $s$ with $s= \sum_{j=1}^m x_j$, 
i.e., $p_j(s)= \alpha_j - \beta_j s$, where $\alpha_j > 0$, $\beta_j > 0$. Then the profit made by company $j$ is given by $f_j(x)= p_j(s) x_j -
c_j( x_j)$, where $c_j(x_j)$ is the tax and fee for generating $x_j$. Suppose that $C_j=[x_j^{\min},x_j^{\max}]$ is the strategy set of company $j$, then the
strategy set of the model is $C:= C_1\times C_2 \times ...\times C_m$. Actually, each company seeks to  maximize its profit by choosing the
corresponding production level under the presumption that the production of the other companies is a parametric input.
 A commonly used approach to this model is based upon the famous Nash equilibrium concept. We recall  that a point $x^* \in C=C_1\times C_2
\times\cdots\times C_m$ is  an equilibrium point of the model   if
    $$f_j(x^*) \geq f_j(x^*[x_j]) \ \forall x_j \in C_j, \ \forall  j=1,2,\ldots,m,$$
    where the vector $x^*[x_j]$ stands for the vector obtained from
    $x^*$ by replacing $x^*_j$ with $x_j$.
By taking $f(x, y):= \psi(x,y)-\psi(x,x)$ with $\psi(x,y):=  -\sum_{j = 1}^m f_j(x[y_j])$, 
the problem of finding a Nash equilibrium point of the model can be formulated as:
$$\mbox{Find}~x^* \in C~\mbox{such that}~ f(x^*,x) \geq 0 ~ \forall x \in C. $$
Now, assume that the tax-fee function $c_j(x_j)$ is increasing and affine for every $j$. This assumption means that both of the tax and fee  
for producing a unit are increasing as the quantity of the production gets larger. In that case, the bifunction $f$ can be formulated in 
the form 
$$f(x,y)=\left\langle Px+Qy+q,y-x\right\rangle,$$
where $q\in \Re^m$ and $P,~Q$ are two matrices of order $m$ such that $Q$ is symmetric positive semidefinite and $Q-P$ is symmetric 
negative semidefinite. The bifucntion $f$ satisfies the Lipschitz-type condition with $c_1=c_2=||P-Q||/2$. In order to assure that all the algorithms 
can work, the datas here are generated randomly such that $Q-P$ is symmetric negative \textit{definite}. In this case, the bifunction $f$ is 
strongly pseudomonotone. We perform the numerical computations in $\Re^m$ with $m=100,~200,~300$; the feasible set is $C=[-2,5]^m$; 
the starting point $x_1=\bar{x}_0$ are generated randomly in the interval $[0,1]$. The datas are generated as follows: All the entries of $q$ is generated randomly 
and uniformly in $(-2,2)$ and the two matrices $P,~Q$ are also 
generated randomly\footnote{We randomly choose $\lambda_{1k}\in (-2,0),~\lambda_{2k}\in (0,2),~ k=1,\ldots,m$. We set $\widehat{Q}_1$, $\widehat{Q}_2$ 
as two diagonal matrixes with eigenvalues $\left\{\lambda_{1k}\right\}_{k=1}^m$ and $\left\{\lambda_{2k}\right\}_{k=1}^m$, respectively. Then, we 
construct a positive semidefinite matrix $Q$ and a negative definite matrix $T$ by using random orthogonal matrixes with $\widehat{Q}_2$ 
and $\widehat{Q}_1$, respectively. Finally, we set $P=Q-T$} such that their conditions hold. All the optimization subproblems are effectively solved 
by the function \textit{quadprog} in Matlab 7.0.\\[.1in]
We take $\lambda=\lambda_i:=\frac{p_i\varphi}{4c_1}$, $i=1,2,3,4$ for Algorithm \ref{alg1} (GRA), where $p_1=0.9,~p_2=0.7,~p_3=0.5,~p_4=0.3$ and 
$\lambda_n=\beta_n=\frac{1}{n+1}$ for the MGRA1 and MGRA2. The numerical results are shown in Figs. \ref{fig1} - \ref{fig6}. In view of these figures, 
we see that Algorithm \ref{alg1} works better than other algorithms in both execution time and number of iterations.

\begin{figure}[!ht]
\begin{minipage}[b]{0.45\textwidth}
\centering
\includegraphics[height=5cm,width=6cm]{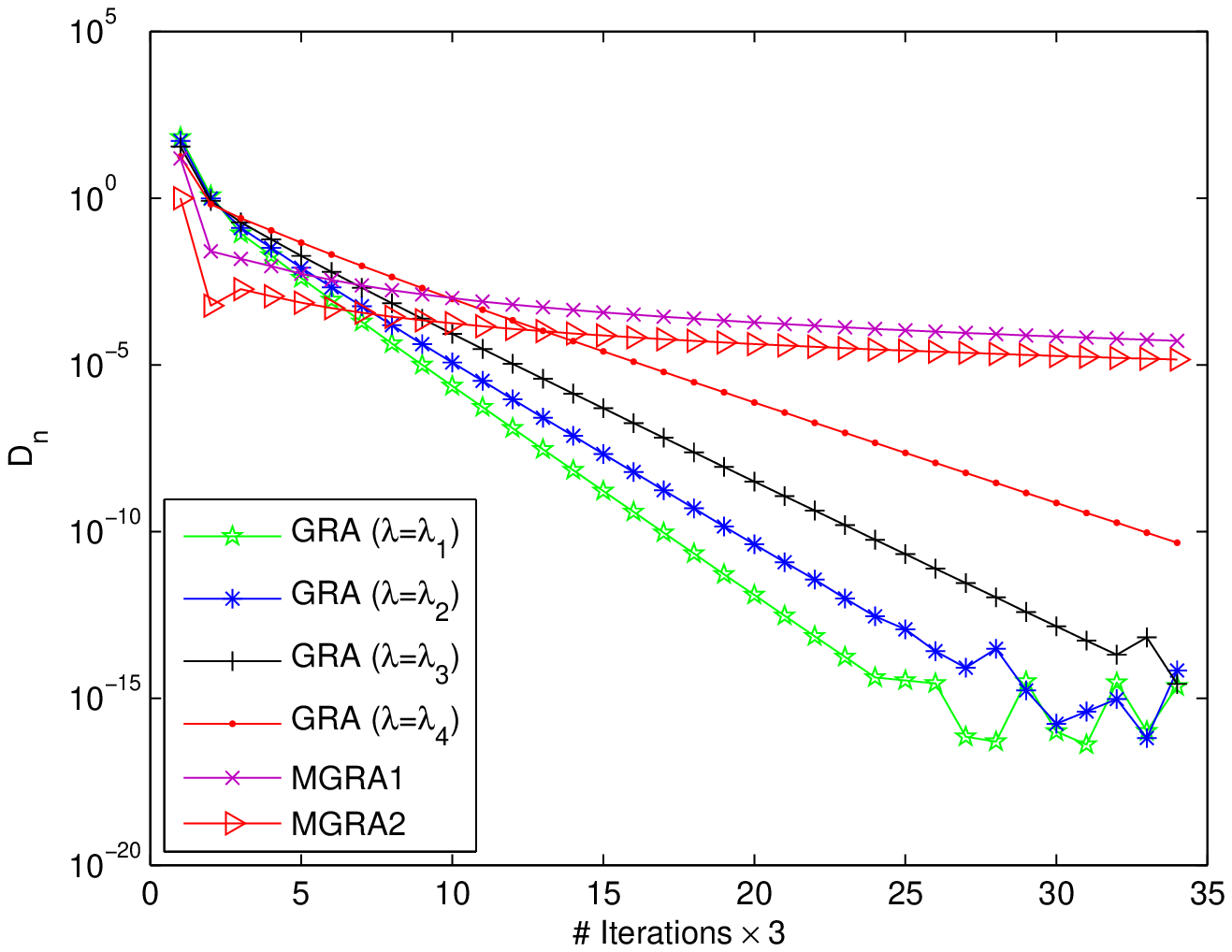}
\caption{$D_n$ and \# Iterations (in $\Re^{100}$)}\label{fig1}
\end{minipage}
\hfill
\begin{minipage}[b]{0.45\textwidth}
\centering
\includegraphics[height=5cm,width=6cm]{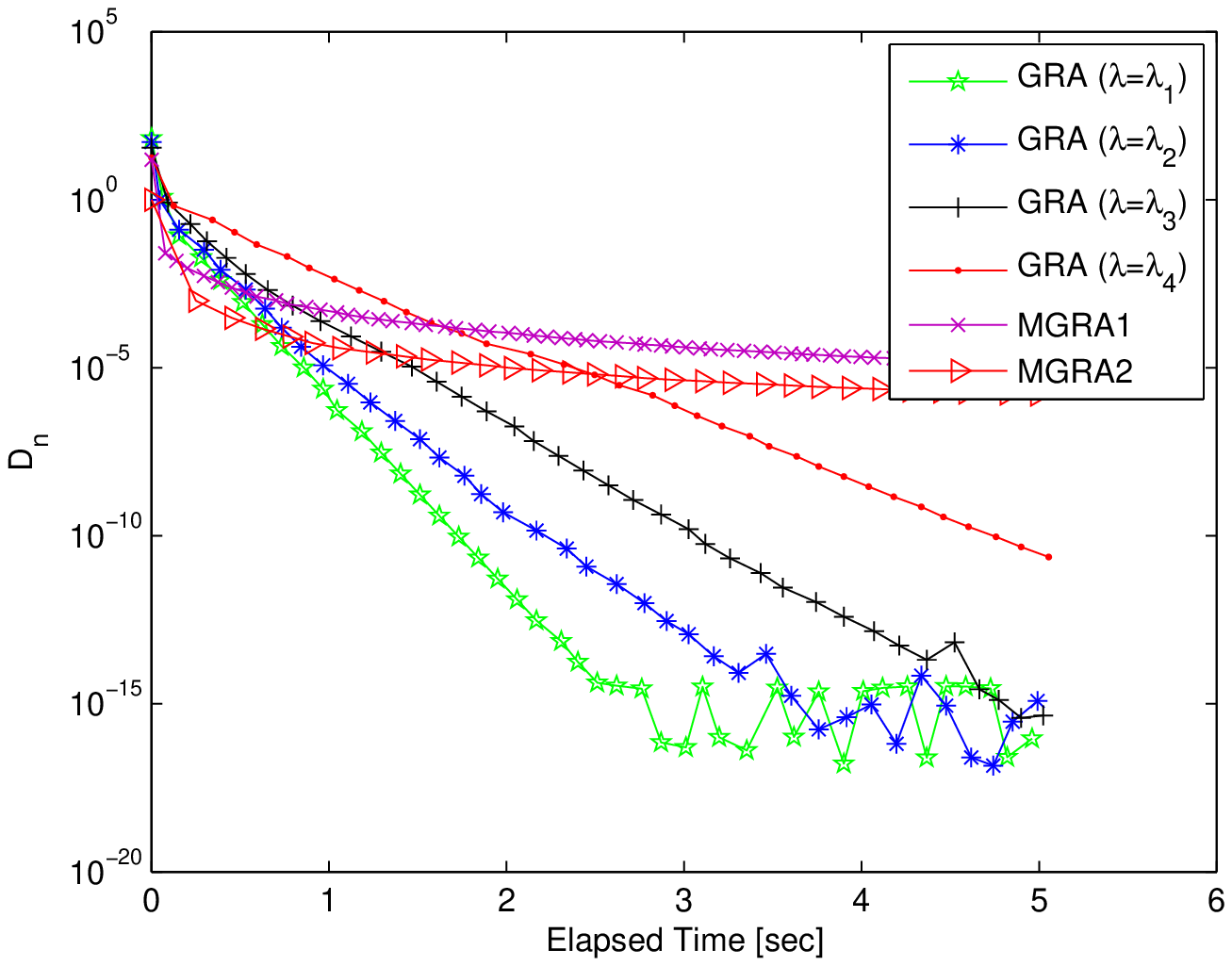}
\caption{$D_n$ and Time (in $\Re^{100}$)}\label{fig2}
\end{minipage}
\end{figure}

\begin{figure}[!ht]
\begin{minipage}[b]{0.45\textwidth}
\centering
\includegraphics[height=5cm,width=6cm]{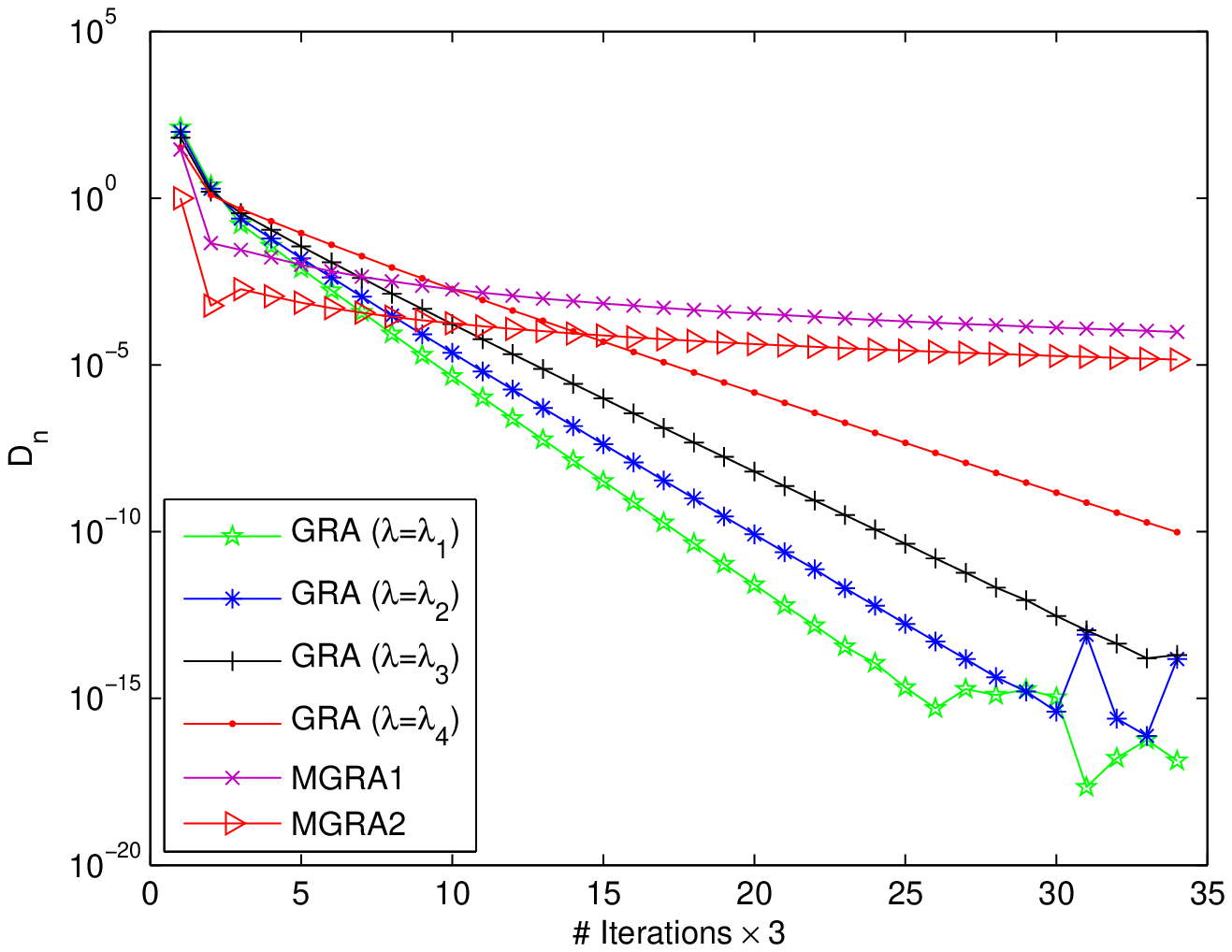}
\caption{$D_n$ and \# Iterations (in $\Re^{200}$)}\label{fig3}
\end{minipage}
\hfill
\begin{minipage}[b]{0.45\textwidth}
\centering
\includegraphics[height=5cm,width=6cm]{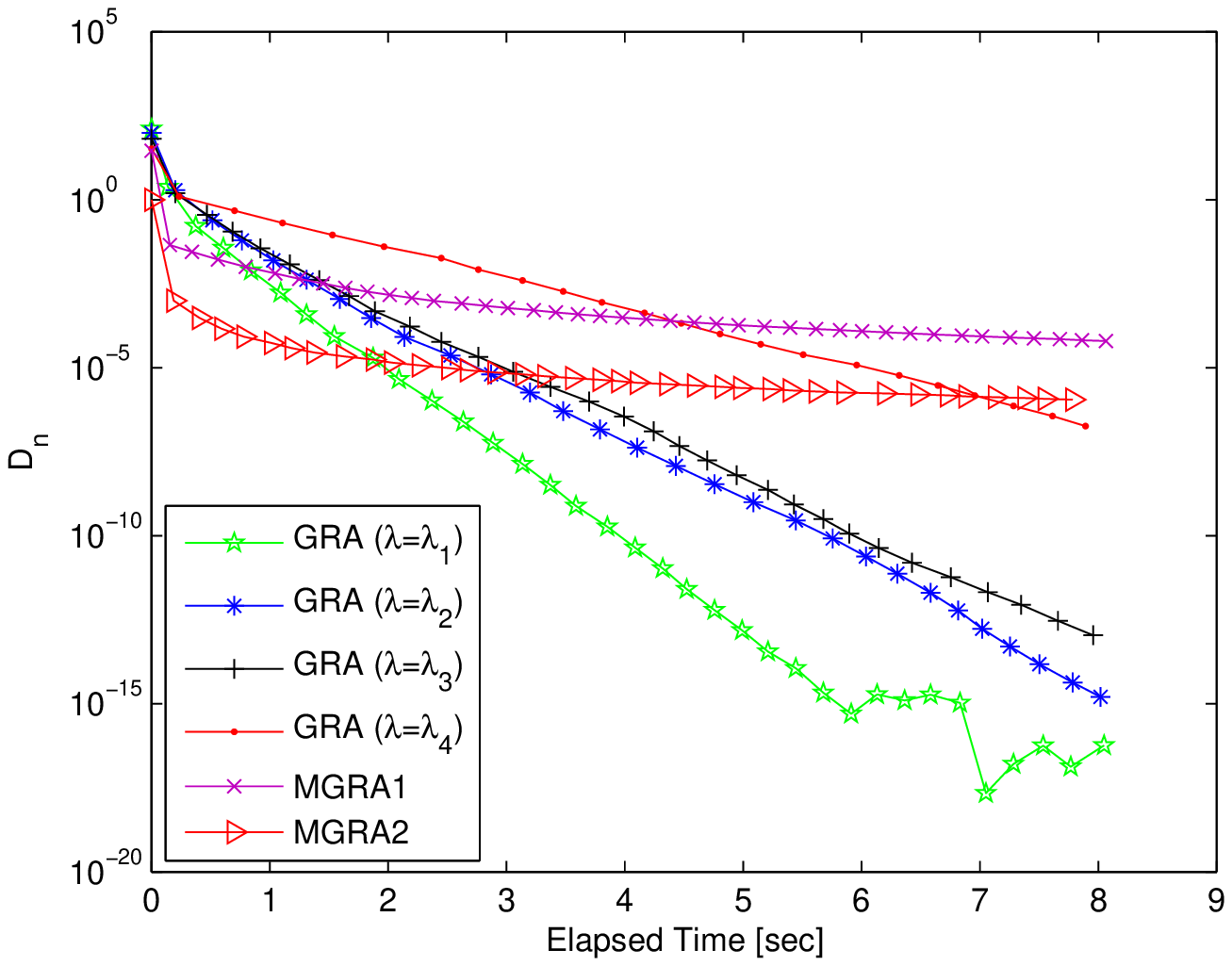}
\caption{$D_n$ and Time (in $\Re^{200}$)}\label{fig4}
\end{minipage}
\end{figure}

\begin{figure}[!ht]
\begin{minipage}[b]{0.45\textwidth}
\centering
\includegraphics[height=5cm,width=6cm]{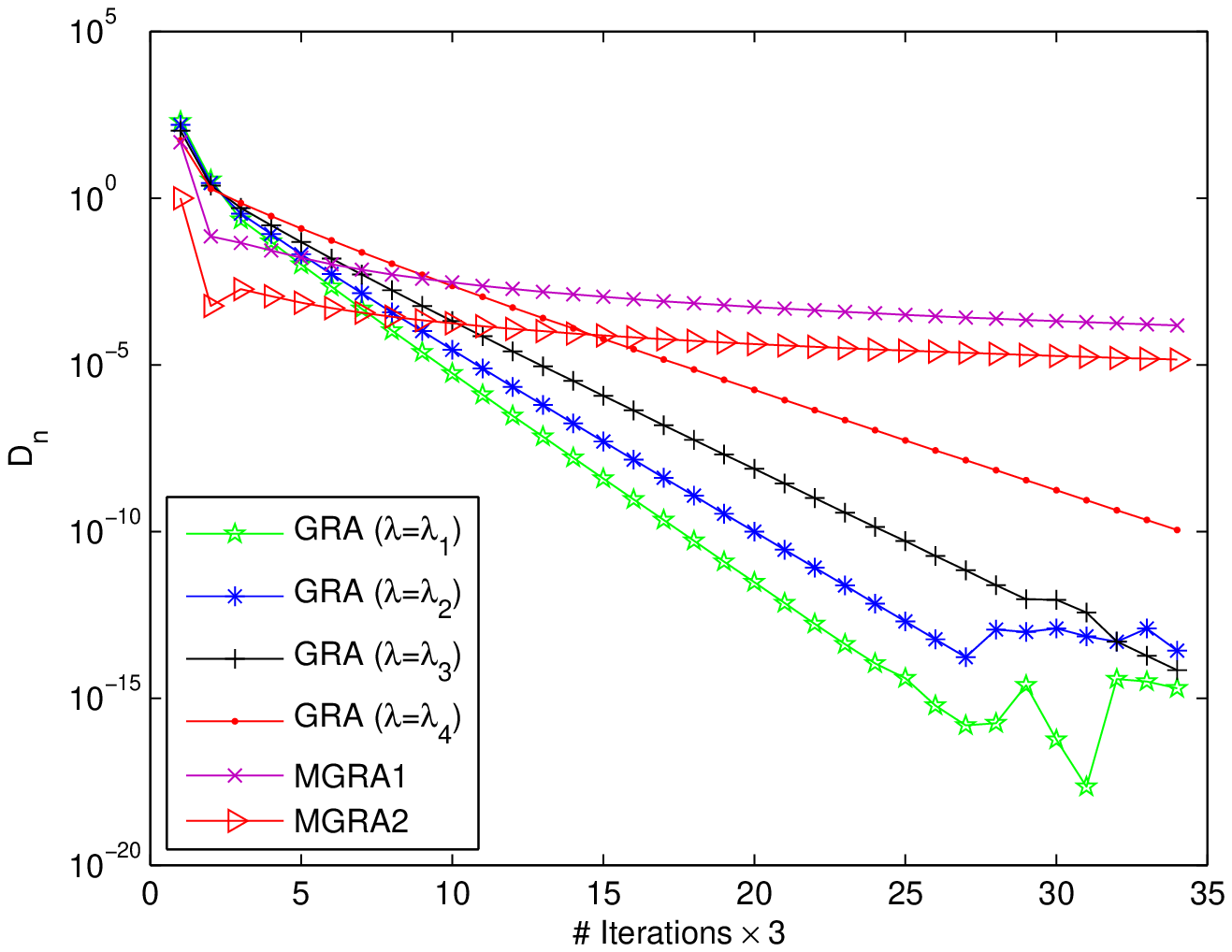}
\caption{$D_n$ and \# Iterations (in $\Re^{300}$)}\label{fig5}
\end{minipage}
\hfill
\begin{minipage}[b]{0.45\textwidth}
\centering
\includegraphics[height=5cm,width=6cm]{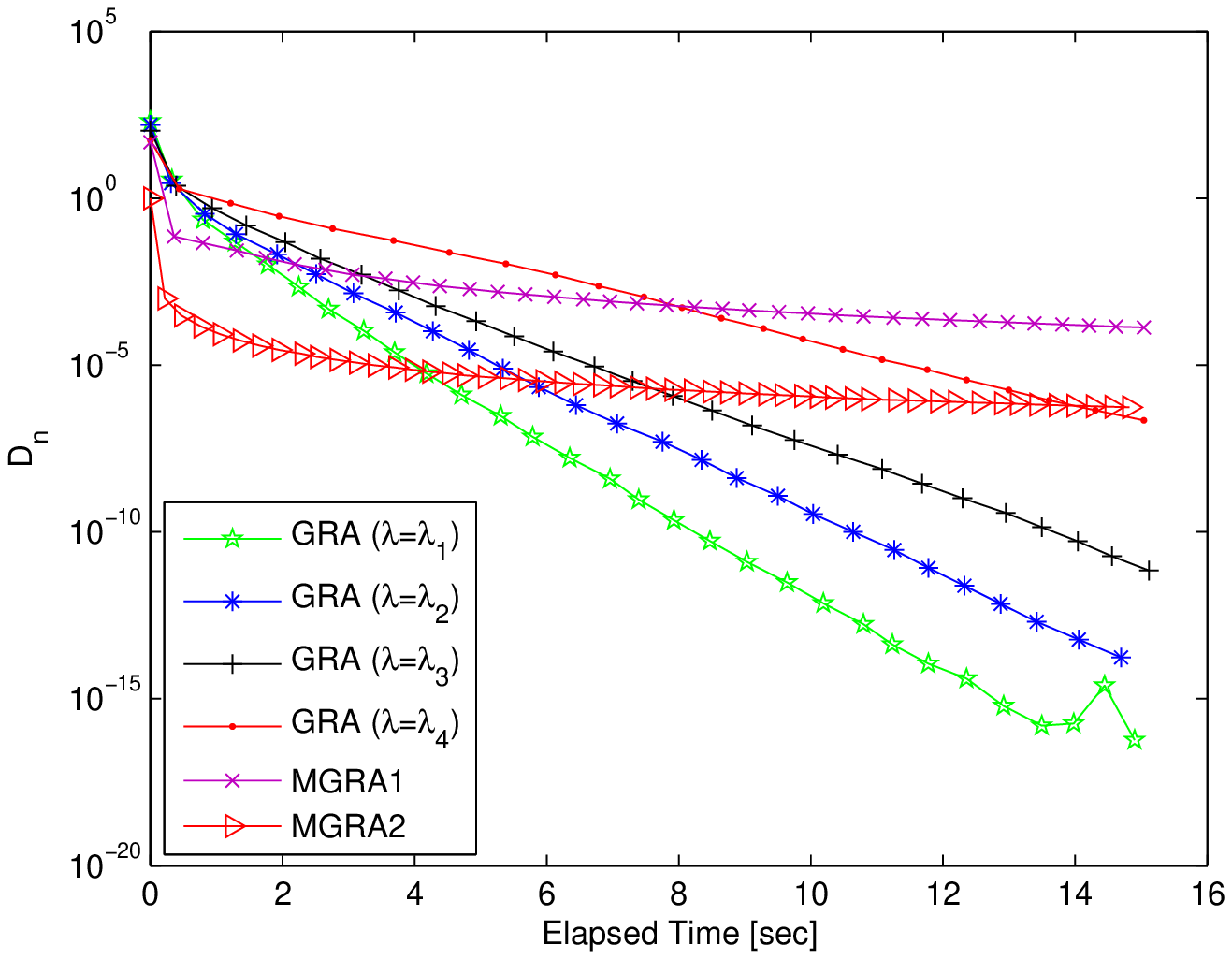}
\caption{$D_n$ and Time (in $\Re^{300}$)}\label{fig6}
\end{minipage}
\end{figure}
 
 \section*{Acknowledgements}
We would like to thank \textbf{Dr. Yura Malitsky} and \textbf{Dr. Nguyen The Vinh} for sending us the papers \cite{M2018,V2018} and for many valuable comments.

\end{document}